\newtheorem{theorem}{Theorem}[section]
\theoremstyle{definition}
\newtheorem{lemma}[theorem]{Lemma}
\newtheorem{remark}[theorem]{Remark}
\newtheorem{question}[theorem]{Question}
\def\BZ{\mathbbm Z}
\def\BQ{\mathbbm Q}
\def\BC{\mathbbm C}
\def\calS{\mathcal S}
\def\s{\sigma}
\def\la{\langle}
\def\ra{\rangle}
\def\SL{\mathrm{SL}}
\def\longto{\longrightarrow}
\def\a{\alpha}
\def\ve{\varepsilon}
\def\PSL{\mathrm{PSL}}
\def\be{\begin{equation}}
\def\ee{\end{equation}}
\def\z{\zeta}
\def\hab{\widehat{\BZ[q]}}
\def\vphi{\varphi}
\def\CS{{\mathcal S}}
\def\fsl{\mathfrak{sl}}
\def\sV{\mathsf{V}}
\def\Span{\mathrm{Span}}
\def\lk{\mathrm{lk}}
\def\SO{\mathrm{SO}}
\def\SU{\mathrm{SU}}
\newcommand\incl[2]{{\includegraphics[height=#1]{draws/#2.eps}}}
\def\cross{  \raisebox{-8pt}{\incl{.8 cm}{cross}} }
\def\resoP{  \raisebox{-8pt}{\incl{.8 cm}{resoP}} }
\def\resoN{  \raisebox{-8pt}{\incl{.8 cm}{resoN}} }
\def\trivloop{  \raisebox{-8pt}{\incl{.8 cm}{trivloop}} }
\def\emptyr{\raisebox{-8pt}{\incl{.8 cm}{empty}}}
\def\pM{{\partial M}}
\def\SM{{\mathcal S}(M)}
\def\WRT{{\mathsf{WRT}}}
\def\habq{{ \hab \ot _{\BZ[q^{\pm 1}]   } \Zq  }}
\def\no#1{}
\def\Zq{{ \BZ[q^{\pm 1/4}]}}
\def\Zqq{{\BZ[q^{\pm 1}]}}
\def\Qq{{ \BQ(q^{ 1/4})}}
\def\Rep{{\mathsf{Rep}}}
\def\RepQ{{\Rep_\Qq}}
\def\sV{{\mathsf{V}}}
\def\ot{{\otimes}}
\renewcommand\thepart{\@Roman\c@part}%
\renewcommand\part{%
   \if@noskipsec \leavevmode \fi
   \par
   \addvspace{6.7ex}%
   \@afterindentfalse
   \secdef\@part\@spart}
\def\@part[#1]#2{%
    \ifnum \c@secnumdepth >\m@ne
      \refstepcounter{part}%
      \addcontentsline{toc}{part}{Part~\thepart.\ #1}%
    \else
      \addcontentsline{toc}{part}{#1}%
    \fi
    {\parindent \z@ \raggedright
     \interlinepenalty \@M
     \normalfont
     \ifnum \c@secnumdepth >\m@ne
       \centering\large\scshape \partname~\thepart.%
       \hspace{1ex}%
     \fi%
     \large\scshape #2%
     \markboth{}{}\par}%
    \nobreak
    \vskip 4.7ex
    \@afterheading}
  \def\@spart#1{
  \refstepcounter{part}%
  \addcontentsline{toc}{part}{#1}%
    {\parindent \z@ \raggedright
     \interlinepenalty \@M
     \normalfont
     \centering\large\scshape #1\par}%
     \nobreak
     \vskip 4.7ex
     \@afterheading}
\renewcommand*\l@part[2]{%
  \ifnum \c@tocdepth >-2\relax
    \addpenalty\@secpenalty
    \addvspace{0.75em \@plus\p@}%
    \begingroup
      \parindent \z@ \rightskip \@pnumwidth
      \parfillskip -\@pnumwidth
      {\leavevmode
       \normalsize \bfseries #1\hfil \hb@xt@\@pnumwidth{\hss #2}}\par
       \nobreak
       \if@compatibility
         \global\@nobreaktrue
         \everypar{\global\@nobreakfalse\everypar{}}%
      \fi
    \endgroup
  \fi}
\def\l@subsection{\@tocline{2}{0pt}{2pc}{6pc}{}}
\begin{document}
\title[From 3-dimensional skein theory to functions near $\BQ$]{
       From 3-dimensional skein theory to functions near $\BQ$}
\author{Stavros Garoufalidis}
\address{
  International Center for Mathematics, Department of Mathematics \\
  Southern University of Science and Technology \\
  Shenzhen, China \newline
  {\tt \url{http://people.mpim-bonn.mpg.de/stavros}}}
\email{stavros@mpim-bonn.mpg.de}
\author{Thang T.Q. L\^e}
\address{School of Mathematics \\
         Georgia Institute of Technology \\
         Atlanta, GA 30332-0160, USA \\
{\tt http://www.math.gatech} \newline {\tt .edu/$\sim$letu } }
\email{letu@math.gatech.edu}

\thanks{
  {\em Key words and phrases}: knots, links, 3-manifolds, Jones polynomial,
  Kauffman bracket, skein theory, Witten's Conjecture, Habiro ring, Quantum
  Modularity Conjecture, Volume Conjecture, functions ``near'' $\BQ$,
  character varieties.
}

\date{17 July 2023}

\begin{abstract}
  Motivated by the Quantum Modularity Conjecture and its arithmetic aspects
  related to the Habiro ring of a number field, we define a map from the
  Kauffman bracket skein module of an integer homology 3-sphere to the Habiro ring,
  and use Witten's conjecture (now a theorem) to show that the image is an
  effectively computable module of finite rank that can be used to phrase the
  quantum modularity conjecture.
\end{abstract}

\maketitle

{\footnotesize
\tableofcontents
}


\section{Introduction}
\label{sec.intro}

\subsection{Skein theory, the Habiro ring and the Quantum Modularity
  Conjecture}

The paper concerns a connection between three concepts of rather different
nature:
\begin{itemize}
\item[(a)]
  3-dimensional skein theory on a closed 3-manifold,
\item[(b)]
  functions near $\BQ$ and,
\item[(c)]
  the Quantum Modularity Conjecture.
\end{itemize}

The first of them, the Kauffman bracket skein module $\calS(M)$ of a 3-manifold
$M$, goes back to Przytycki~\cite{Przytycki} and Turaev~\cite{Tu:conway}
in the 1990s. It is a (non-finitely generated) $\BZ[q^{\pm 1/4}]$-module associated
to a 3-manifold whose generators are framed links, in the spirit of Conway’s
interpretation of knot invariants and the skein theory of the Jones
polynomial. This fruitful idea has been studied and extended by many authors
including Bonahon--Wong, Frohman-Kania-Bartoszynska and the second
author~\cite{Bonahon-Wong:inventiones,Frohman:inventiones}. 

The second involves a completion 
\be
\label{hab}
\hab = \varprojlim \;\BZ[q]/((q;q)_n), \qquad (q;q)_n := \prod_{j=1}^n (1-q^j)
\ee
of the polynomial ring $\BZ[q]$ discovered by Habiro in the early 2000s
(the so-called Habiro ring)~\cite{Habiro:completion} whose elements have remarkable
arithmetic properties. 
An element of the Habiro ring gives a function ``near'' $\BQ$, that is,
a collection of formal power series $f_\z(q) \in \BZ[\z] \llbracket q-\z \rrbracket$
for every complex root of unity $\z$ which satisfies integrality and congruence
properties and allows to arithmetically compute one series $f_\z(q)$ from another one.
Although these functions near $\BQ$ are not analytic in the usual topology of the
complex numbers (nor are these power series convergent), they are analytic from
the arithmetic point of view. The Habiro ring was motivated by the quantum 3-manifold
invariants and provides a natural home for them.
Indeed, two important quantum invariants defined on the set of complex roots of unity,
namely the Kashaev invariant of a knot in $S^3$ and the WRT invariant of an integer
homology 3-sphere associated to a simple Lie algebra can be lifted to elements of the
Habiro ring. The former statement was shown by Huynh and the second author~\cite{HuL}
(see also Habiro~\cite{Habiro:WRT}) and the latter by Habiro for
$\mathfrak{sl}_2$~\cite{Habiro:WRT}
and by Habiro and the second author for general simple Lie algebras~\cite{HL}.
The values at complex roots of unity and the coefficients of their power
series expansions of these functions ``near'' $\BQ$ (as were called
in~\cite{GZ:kashaev}) carry remarkable analytic information of the 
elements of the Habiro ring and the corresponding 3-manifolds that they come from.

And this brings us to the third concept, namely to the Quantum Modularity Conjecture
of Zagier and the first author~\cite{GZ:kashaev,GZ:qseries}. This is a more recent,
lesser known and conjectural topic which to a 3-manifold $M$ associates a matrix $J_M$
of functions ``near'' $\BQ$ (that is power series at each root of unity) with
interesting analytic and arithmetic properties. This conjecture for a 3-manifold
with torus boundary (such as a hyperbolic knot complement) implies Kashaev's
celebrated Volume
Conjecture~\cite{K95} to all orders in perturbation theory and with
exponentially small terms included. At the same time, the conjecture predicts that
the arithmetically defined power series at each root of unity glue together to
elements of the Habiro ring of a number field, allowing to recover a series at
one root of unity from another one as is investigated currently~\cite{GSZ:habiro}.

An extension of this conjecture for
closed 3-manifolds was studied by Wheeler in his
thesis~\cite{Wheeler:thesis}, and it is this case (even more the case
of an integer homology 3-sphere) that we will focus on.
For such manifolds, the rows and columns of the matrix $J_M$ are expected to be
parametrized by the space
\be
\label{XM}
X_M=\text{Hom}(\pi_1(M),\SL_2(\BC))/\!\!/\SL_2(\BC)
\ee
of $\SL_2(\BC)$-representations of $\pi_1(M)$ modulo conjugation when
$X_M$ is 0-dimensional and reduced. $X_M$ has a
distinguished element $\s_0$, the trivial representation which determines a
distinguished (top) row and (left) column of $J_M$. When $M$ is hyperbolic,
$X_M$ contains a lift $\s_1$ of the discrete faithful $\PSL_2(\BC)$-representation. 
The elements of the (top) row of $J_M$ are expected belong to the 
Habiro ring, and for a hyperbolic 3-manifold $M$, the element of $J_{M,\s_0,\s_1}$
is supposed to be the WRT invariant, considered as an element of the Habiro
ring ~\cite{Habiro:WRT}. 
In fact, $\BQ(q)$-span of the elements of the top row of $J_M$ are expected to
define a topological invariant of $M$, and all elements $J_{M,\s_0,\s}$ 
of the first row of $J_M$, like the distinguished element $J_{M,\s_0,\s_1}$, are
supposed to satisfy a generalization of the Chen-Yang Volume Conjecture to all
orders, and with exponentially small terms included. The elements
$J_{M,\s_0,\s}$ of the $\s_0$-row of $J_M$ were called ``descendants'' of the WRT
invariant of a closed 3-manifold~\cite{GZ:kashaev,Wheeler:thesis}.

All that may be interesting, however the indexing set $X_M$ of the rows and columns
of the matrix $J_M$ is an affine complex algebraic variety which is not always
zero-dimensional, nor reduced in general. Thus a general formulation of the
Quantum Modularity Conjecture for all closed 3-manifolds (and even the size of
the matrix $J_M$ or the indexing set of its rows and columns) is currently unknown.
There are hints, however, that the size of the matrix $J_M$ is related to the rank
of an $\SL_(\BC)$-version of Floer homology introduced by Abouzaid--Manolescu
~\cite{AM:SL2}.
We should stress that this conjecture arose from extensive experimental evidence
described in detail in~\cite[Sec.4]{GZ:kashaev}, and although we know concrete
examples with of hyperbolic ZHS with interesting character varieties $X_M$ to
experiment with, their complexity is beyond current computational limits.

\subsection{Our results}
\label{sub.results}

The goal of our paper is to prove a consequence of the Quantum Modularity Conjecture,
and to answer the topological nature of the rather mysterious ``descendants'' of
the WRT invariant. The answer is remarkably simple: the descendants are really
the Jones polynomial invariants of links in a fixed ambient integer homology
3-sphere, only that a link in an integer homology 3-sphere does not have a Jones
polynomial, but rather an element of the Habiro ring as was explained in~\cite{GL:is}.
However, there are infinitely many such links, whereas we want a $\Zq$-module
of finite rank. That is exactly where Witten's finiteness conjecture for the skein
module $\calS(M)$ of a closed 3-manifold~\cite{Witten:analytic-continuation} and its
resolution by Gunningham--Jordan--Safronov~\cite{Gunningham-Jordan} comes in.
Putting everything together, we obtain a $\BZ[q^{\pm 1/4}]$-module
$V_M \subset \hab_{\otimes \BZ[q]} \BZ[q^{\pm 1/4}] $
of finite rank, which is an effectively computable topological invariant. (By
``rank'' we mean the dimension of the $\BQ(q^{1/4})$-vector space
$V_M \otimes_{\BZ[q^{\pm 1/4}]} \BQ(q^{1/4})$). 

Fix an integer homology 3-sphere (in short, ZHS) $M$, that is a closed oriented
3-manifold with $H_1(M,\BZ)=0$. The map in the next theorem is essentially
given by the WRT invariant $(M,L)$ of a framed link $L\subset M$ in a ZHS $M$.

\begin{theorem}
\label{thm.1}
For every ZHS $M$, there is a map
\be
\label{vphi}
\vphi_M : \calS(M) \longto \hab_{\otimes \BZ[q]} \BZ[q^{\pm 1/4}]
\ee
of $\BZ[q^{\pm 1/4}]$-modules, whose image $V_M$ is a
$\BZ[q^{\pm 1/4}]$-module of finite rank. 
\end{theorem}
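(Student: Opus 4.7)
The plan is to define $\vphi_M$ on generators of $\calS(M)$---isotopy classes of framed links $L\subset M$---and then check that it respects the defining Kauffman bracket skein relations, after which the finite-rank assertion will follow immediately from an available deep theorem. On a framed link $L\subset M$, I would set $\vphi_M(L)$ equal to the $\mathfrak{sl}_2$ WRT invariant of the pair $(M,L)$, regarded as an element of the Habiro ring rather than as a function on the set of complex roots of unity. The existence of such a Habiro lift, for the WRT invariant of a ZHS equipped with a framed link, is provided by Habiro~\cite{Habiro:WRT} and Habiro--L\^e~\cite{HL}, with the Kashaev analogue for links in $S^3$ already handled in~\cite{HuL}. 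Because the framing contributes a factor involving a quarter-integer power of $q$, the natural codomain is $\hab\otimes_{\BZ[q]}\BZ[q^{\pm 1/4}]$, and one extends $\BZ[q^{\pm 1/4}]$-linearly from the free module on framed links.

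Next I would verify that this assignment descends to the quotient $\calS(M)$. Fix a surgery presentation $M=S^3_{L'}$; the WRT invariant of $(M,L)$ is then computed as a normalization of the Kauffman bracket in $S^3$ of $L\cup L'$, with each component of $L'$ colored by the Kirby element (viewed in the Habiro completion of the skein module of the solid torus). Given a Kauffman skein relation performed in a small 3-ball $B\subset M$, I may isotope $L'$ to lie entirely outside $B$, so that the relation reduces to the usual local Kauffman skein relation in $B\subset S^3$, which is satisfied by the Kauffman bracket. Independence from the choice of surgery link follows from the Kirby-calculus invariance already built into the construction of the WRT invariant and its Habiro lift in~\cite{Habiro:WRT,HL}. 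At this stage $\vphi_M\colon \calS(M)\to \hab\otimes_{\BZ[q]}\BZ[q^{\pm 1/4}]$ is a well-defined $\BZ[q^{\pm 1/4}]$-linear map.

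The finite-rank conclusion is then a one-line consequence of the Gunningham--Jordan--Safronov resolution~\cite{Gunningham-Jordan} of Witten's finiteness conjecture~\cite{Witten:analytic-continuation}: the $\BQ(q^{1/4})$-vector space $\calS(M)\otimes_{\BZ[q^{\pm 1/4}]}\BQ(q^{1/4})$ is finite-dimensional. Tensoring the surjection $\calS(M)\twoheadrightarrow V_M$ with $\BQ(q^{1/4})$ yields a surjection of $\BQ(q^{1/4})$-vector spaces whose source is finite-dimensional, so $V_M\otimes_{\BZ[q^{\pm 1/4}]}\BQ(q^{1/4})$ is finite-dimensional, which is the precise notion of finite rank asserted. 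The main obstacle throughout is not this finiteness step, but the careful well-definedness check for the Habiro-ring lift of the WRT invariant of $(M,L)$---in particular its simultaneous compatibility with Kauffman skein relations on $L$ and with Kirby moves on the surgery presentation $L'$.
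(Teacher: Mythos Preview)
Your proposal is correct and follows essentially the same route as the paper: define $\vphi_M$ via the Habiro-ring lift of the WRT invariant of $(M,L)$, check the Kauffman relations locally after pushing the surgery link $L'$ off a small ball, and invoke Gunningham--Jordan--Safronov for the finite-rank statement. Two points the paper treats more carefully than you do: first, the map is not literally $L\mapsto I_{(M,L)}(q)$ but rather $L\mapsto (-1)^{\sum_i(\lk(L_i,L_i)+1)} I_{(M,L)}(q)$, because the surgery formula for $I_{(M,L)}$ is written in terms of the colored Jones polynomial $J$, which differs from the Kauffman bracket by exactly this sign (Lemma~\ref{r.JonesKauff}); without the correction the skein relation \eqref{eq.skein0} fails. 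Second, the Habiro-ring lift of the WRT invariant of a pair $(M,L)$ with $L$ nonempty is not quite in~\cite{Habiro:WRT} or~\cite{HL}; the paper states and proves it as Theorem~\ref{thm.surgery}, relying on an integrality result (Theorem~\ref{thm.integral1}) extracted from~\cite[Appendix~A]{BL:SO3}.
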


The above theorem explains the title of the paper and connects the largely
unproven Quantum Modularity Conjecture with the skein theory of 3-manifolds and
Witten's conjecture, much in the spirit of the
``Witten cylinder''~\cite[Fig.3]{GZ:kashaev}. It also fits well with recent results
by Detcherry--Kalfagianni--Sikora~\cite{DKS} who show, when $X_M$ is 0-dimensional
and satisfies further technical conditions, then 
 $\calS(M) \otimes_{\BZ[q^{\pm 1/4}]} \BQ(q^{1/4})$ is a
$\BQ(q^{1/4})$-vector space of rank $|X_M|$ much in the spirit of the Quantum
Modularity Conjecture. 

Aside from these connections, the theorem motivates several questions of arithmetic
and analytic origin, of interest on their own.

\begin{question}
\label{que.1}
If $M$ is hyperbolic, is the map $\vphi_M$ injective?
Is the image of $\vphi_M$ a finitely-generated
$\BZ[q^{\pm 1/4}]$-module?   
\end{question}

Regarding an effective computation of the image of $\vphi_M$, one can compare it
with two computable modules neither of which is a topological invariant of $M$
and both expected to be generically isomorphic to $V_M$ after perhaps tensoring
over $\BQ(q^{1/4})$. The first one 
\be
\label{V1}
V_{M,K} := \Span_{\Zq} \{ \vphi(K^{(m)}) \,|\, m \geq 0 \} \subset V_M 
\ee
is a submodule of $V_M$ defined for every framed knot $K$ in $M$, where $K^{(m)}$
denotes the $m$-th parallel of $K$ in $M$. 

The second one is defined when $M$ is obtained by surgery on a knot $K$ in $S^3$,
in which case (since $M$ is a ZHS), we have $M=S_{K,-1/b}$ for an integer $b$. 
This $\BZ[q^{\pm 1/4}]$-module $D_{b,K}$ (called a ``descendant''
in~\cite{GZ:kashaev,GZ:qseries,Wheeler:thesis}) can be defined in terms
of a multiparameter deformation of a formula for $\vphi_M(\emptyset)$ {given
in Equation~\eqref{IMbdesc} below}. Our next result relates the descendant
module with $V_M$.

\begin{theorem}
\label{thm.desc}
For all knots $K$ in $S^3$ and for all integers $b$, we have an inclusion 
\be
\label{V2}
V_{M,K} \subset D_{b,K}
\ee
of $\BZ[q^{\pm 1/4}]$-modules of finite rank, where $M=S^3_{K,-1/b}$.
\end{theorem}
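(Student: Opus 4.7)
The plan is to combine the Reshetikhin--Turaev surgery formula for $\vphi_M$ with the cabling identity for the colored Jones polynomial. Since $M=S^3_{K,-1/b}$, any framed link $L\subset M$ can be represented by a framed link $L'\subset S^3\setminus K$, and $\vphi_M(L)$ is (up to an overall framing correction depending only on $M$) the Laplace-type sum, indexed by the color $n\ge 1$ of the surgery component $K$, of the colored Jones polynomial of $K\cup L'\subset S^3$, weighted by the $-1/b$-framing Gaussian $q^{bn^2/4}$, the quantum dimension $[n]$ and the Habiro integrality factors that lift the sum to $\hab\otimes_{\BZ[q]}\Zq$. For $L=\emptyset$ this is (presumably) the formula~\eqref{IMbdesc}, which schematically reads
\be
\vphi_M(\emptyset)=\sum_{n\ge 1} F_b(n,q)\, J_n(K)(q),
\ee
with $F_b(n,q)$ collecting the Gaussian, the quantum dimension and the normalizations.

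The multiparameter deformations of this formula produce, as natural generators,
\be
D_{b,K}^{(\alpha)}:=\sum_{n\ge 1} F_b(n,q)\, q^{\alpha n}\, J_n(K)(q)\qquad(\alpha\in\BZ),
\ee
whose $\Zq$-span is $D_{b,K}$. Finite rank of $D_{b,K}$ follows from $q$-holonomicity of the colored Jones polynomial: the non-commutative $A$-polynomial of $K$ provides a finite-order linear $q$-recursion satisfied by the sequence $(q^{\alpha n}J_n(K))_{\alpha\in\BZ}$, and this recursion survives the summation against $F_b(n,q)$, forcing only finitely many $D_{b,K}^{(\alpha)}$ to be linearly independent.

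For a parallel $K^{(m)}\subset M$, I would work in the skein of a tubular neighborhood of $K\subset S^3$: the element representing the $m$-th parallel of the core, once decomposed via the Jones--Wenzl projectors (or, equivalently, via the Chebyshev basis of the skein of the annulus), is a finite $\Zq$-linear combination of shifts of the core. Applying this inside the surgery formula gives an expansion
\be
J_n(K^{(m)})=\sum_{\alpha\in T_m}c_\alpha^{(m)}(q)\, q^{\alpha n}\, J_n(K)(q),
\ee
for a finite set $T_m\subset\BZ$ and coefficients $c_\alpha^{(m)}(q)\in\Zq$. Substituting this into the surgery formula for $\vphi_M(K^{(m)})$ and interchanging the sums over $n$ and $\alpha$ yields
\be
\vphi_M(K^{(m)})=\sum_{\alpha\in T_m}c_\alpha^{(m)}(q)\, D_{b,K}^{(\alpha)}\,\in\, D_{b,K},
\ee
which is the desired inclusion. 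Finite rank of $V_{M,K}$ is then automatic from $V_{M,K}\subset V_M$ together with Theorem~\ref{thm.1}, and also from the inclusion into $D_{b,K}$.

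The main obstacle will be the bookkeeping of normalizations: one must check that the shift parameters $\alpha$ produced by the cabling/Jones--Wenzl expansion coincide with the parameters of the ``multiparameter deformation'' used in~\cite{GZ:kashaev,GZ:qseries,Wheeler:thesis} to define $D_{b,K}$, that the $-1/b$-framing Gaussian $q^{bn^2/4}$ interacts correctly with the $q^{\alpha n}$ shifts (producing only integer powers of $q^{1/4}$ and preserving Habiro integrality), and that the interchange of summation over $n$ and over $\alpha$ is legitimate inside $\hab\otimes_{\BZ[q]}\Zq$. Once these conventions are aligned, both the inclusion $V_{M,K}\subset D_{b,K}$ and the finite-rank statement for $D_{b,K}$ fall out of the $q$-holonomicity of $J_n(K)$.
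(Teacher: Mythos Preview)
Your cabling expansion is the step that does not go through. In the $\sV_n$ basis the Clebsch--Gordan rule gives $\sV_m \otimes \sV_n \cong \bigoplus_s \sV_{n+s}$, so placing $K^{(m)}$ next to the surgery strand produces \emph{index shifts} $J_{n+s}(K)$, not multiplicative twists $q^{\alpha n}J_n(K)$; the displayed identity you wrote for $J_n(K^{(m)})$ is simply false. One could try to convert shifts to twists by absorbing the shift into the weight $F_b$, but the ratio $F_b(n{-}s,q)/F_b(n,q)$ is not a Laurent polynomial in $q^n$ once the quantum dimension $[n]$ sits inside $F_b$. The paper avoids this by working in Habiro's basis $P'_k$: the identity $\sV_m P'_k=\sum_{i=0}^m \gamma^i_m(v,v^k)\,P'_{k+i}$ of Equation~\eqref{VellP'k} has coefficients that are genuine Laurent polynomials in $v^k$, and it is this $v^k$-dependence that produces the $q^{km}$ twists defining the descendants, while the index shifts $k\mapsto k{+}i$ are absorbed via the explicit $q$-difference equations~\eqref{arec} satisfied by the Beliakova--L\^e summand. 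For $b\ge 2$ that last step introduces $q^{\ell_j}$ factors and forces the extra descendant parameters $m_1,\dots,m_{b-1}$ in~\eqref{IMbdesc}; your one-parameter family is not the module $D_{b,K}$ of the theorem.

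There is a second gap hidden in your schematic $\sum_{n\ge1}F_b(n,q)J_n(K)$: this sum truncates at each root of unity but does not, as written, define an element of $\hab\otimes_{\BZ[q]}\Zq$. The lift to the Habiro ring (Theorems~\ref{thm.integral1} and~\ref{thm.surgery}) is stated and proved for the colors $P'_k$, and it is the resulting formula~\eqref{eq.21a} whose multiparameter deformation gives $D_{b,K}$. So the missing idea is not normalization bookkeeping but the choice of basis: the $P'_k$ are what make both the Habiro-ring convergence and the cabling-to-descendant identification work.
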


Since the inclusions~\eqref{V1} and~\eqref{V2} involve
$\BZ[q^{\pm 1/4}]$-modules of finite rank, one expects them to be equalities,
after possibly tensoring with $\BQ(q^{1/4})$. But this brings next question.

\begin{question}
\label{que.2}
Given a finite set of elements of $\hab$, how can one prove that they are
linearly independent over $\BZ[q^{\pm 1}]$?
\end{question}

Despite the simplicity of the question this is a difficult problem and the
only method to resolve it that we know is transcendental passing through
asymptotics and analysis and back to number theory. More precisely,
it involves looking at the asymptotic expansion of these elements of the Habiro
ring at roots of unity (or asymptotic expansion of the coefficients of their
power series expansions in $q-\z$), and using the fact that the coefficients
of these asymptotic expansions
are algebraic numbers (in a fixed number field), deduce the linear independence
of the said elements of the Habiro ring. 
This was exactly proven by Wheeler in a sample hyperbolic ZHS $M_0=S^3_{4_1,-1/2}$
obtained by $-1/2$ Dehn-filling on the simplest hyperbolic $4_1$
knot~\cite[Sec.6.9]{Wheeler:thesis}. In this example, a rigorous computation
by Wheeler~\cite[Prop.1,Sec.2.2]{Wheeler:41} implies that the three modules
in~\eqref{V1} and~\eqref{V2} have the same rank, namely $8$. An independent
calculation of the
skein module by Detcherry--Kalfagianni--Sikora~\cite{DKS} shows that $\calS(M_0)$
also has rank $8$, thus the map $\vphi_{M_0}$ is an isomorphism after tensoring with
$\BQ(q^{1/4})$. On the other hand, the equality of the inclusions~\eqref{V1}
and~\eqref{V2} or the isomorphism of the map $\vphi_{M_0}$ is not known. This question
is similar to the problem of computing the Bloch group $B(F)$ of a number
field (or its higher $K$-theory version): these are finitely-generated abelian groups,
and it is easy to construct infinitely many elements in them, but it is hard to
show that these elements generate the Bloch groups in question. For a detailed
study of this beautiful subject, we refer the reader to Zagier~\cite{Zagier:dilog}
(see also~\cite{Belabas-Gangl}).

For completeness and even though we do not need it in our paper, we end this section
with a comment on the even skein module $\calS^\mathrm{ev}(M) \subset \calS(M)$ of $M$
generated by links with even color
(i.e., color from the root lattice of $\fsl_2(\BC)$), or equivalently from the
2-parallels of all
framed links in $M$. On the one hand, the classical limit of this skein module is
the variety of $\PSL_2(\BC)$-representations of $\pi_1(M)$ that lift to
$\SL_2(\BC)$-representations. This is a union of components of the
$\PSL_2(\BC)$-character variety of $\pi_1(M)$, see~\cite[Thm.4.1]{Culler:lifting}. 
On the other hand, the restriction of the map $\vphi$
\be
\vphi: \calS^\mathrm{ev}(M) \to \hab
\ee
(denoted by the same name) avoids the $q^{1/4}$-powers and takes values in the
Habiro ring itself. 


\section{Basics} 
\label{sec.basics}

\subsection{The colored Jones polynomial}
\label{sub.jones}

In this section we review some basic TQFT properties of the colored Jones
polynomial of a link in $S^3$ and some deeper integrality properties of it.
As we will see below, the latter is an element of the ring $\Zq$ of Laurent
polynomials in a formal variable $q^{1/4}$ with integer coefficients, whose
field of fractions is $\Qq$. We will use the notation

\begin{equation}
\label{notation}
v=q^{\frac{1}{2}}, \,\, [n]=\frac{v^n-v^{-n}}{v-v^{-1}}, \,\,
\{ k\} = v^k -v^{-k}, \,\, \{k\}!=\prod_{j=1}^k \{j\}, \,\, 
\left[ \begin{array}{cc}
    n \\ k
    \end{array}
  \right] = \frac{\{n\}!}{\{k\}! \{n-k\}!} \,.
\end{equation}

The quantized enveloping algebra $U_q(\fsl_2)$ of the Lie algebra $\fsl_2$ is
a Hopf algebra over $\Qq$ defined in a standard way in many books such
as~\cite{Kassel,KS}.

For a non-negative integer $n$ we denote by $\sV_n$ the
$(n+1)$-dimensional irreducible $U_q(\fsl_2)$-module of type 1 and abbreviate
$\sV=\sV_1$. 
We denote by $\Rep$ the $\Zq$-module freely spanned by $\sV_n$ for $n=0,1,2,\dots$.
We consider $\Rep$ as a $\Zq$-submodule of the Grothendieck ring of
$U_q(\fsl_2)$-modules with ground ring $\Zq$. Then $\Rep =\Zq[\sV]$ as a $\Zq$-algebra.
Let $\Rep_\Qq= \Rep \ot_\Zq \Qq$. 

The general operator invariant theory of links \cite{RT0,Turaev} associates to a
framed link $L$ in $S^3$ with $r$ ordered components colored by
$x_1, \dots , x_r\in \Rep$ an invariant $J_L(x_1,\dots,x_r) \in \Zq$,
known as the colored Jones polynomial of $L$. Let us briefly review a few properties
of this invariant which plays a key role in our paper. Fix a framed oriented link $L$
in $S^3$ and a distinguished component $K$ of $L$.

\begin{itemize}
\item
  Orientation reversing operation. The value of $J_L(x_1,\dots,x_r) \in \Zq$ does
  not change if we change the orientation of $K$ and at the same time change the
  color of $K$ to its dual (as $U_q(\fsl_2)$-module). Since every module of
  $U_q(\fsl_2)$ is self-dual, we see that $J_L(x_1,\dots,x_r)$ does not depend on
  the orientation of $L$. Hence we can consider $J_L(x_1,\dots,x_r)$ as an invariant
  of framed unoriented links.
\item
  Linearity. For $x,y\in \Rep$ and $a, b\in \Zq$ we have
  \be
  J_L( \dots, ax + b y, \dots)= a J_L( \dots, x, \dots) + b J_L( \dots, y, \dots)
 \label{eq.linear},
 \ee
 where the written colors are for the distinguished component.
\item
  Tensor product. For $x,y \in \Rep$, and $K$ has color $x\ot y$.
  \be
  J_L( \dots, x\ot y , \dots) = J_{ L^{(2)} }( \dots, x,y , \dots)
 \label{eq.tensor}
 \ee
 where $L^{(2)}$ is the result of replacing the framed knot $K$ by two parallel copies
 (using the framing) and leaving the remaining components of $L$ unchanged. 
\end{itemize}
 
When $x_i=\sV$ for all $i$, we denote $J_L(\sV, \dots, \sV)$ simply by $J_L$, which
we call the Jones polynomial. (Actually $J_L$ is equal to the original Jones
polynomial after a simple variable substitution.)

Following~\cite{Habiro:WRT}, define $P_k \in \Rep$ and $P'_k \in \RepQ$ by 
\be
\label{Pk}
P_k := \prod_{j=0}^{k-1} (\sV-v^{2j+1}-v^{-2j-1}), \qquad
P'_k := \frac{1}{\{k\}!} P_k \,.
\ee
The relation between $\sV_n$ and $P'_n$ was given in~\cite[Lem.6.1]{Habiro:WRT} 
\be
\label{VnP'}
\sV_n = \sum_{i=0}^{n} 
\left[ \begin{array}{cc} n+i+1 \\ 2i+1 \end{array} \right] \{i\}! P'_i
\ee
and the product of two elements $P'_n$ and $P'_m$ was given in 
~\cite[Eqn.(8.1)]{Habiro:WRT}
\be
\label{P'mn}
P'_n \, P'_m = \sum_{i=0}^{\min\{m,n\}}
\frac{\{m+n\}!}{\{i\}! \{m-i\}!\{n-i\}!} P'_{m+n-i} \,.
\ee
Using~\eqref{P'mn} it follows that for $i \leq k$, we have
\be
\label{Pasik}
\{i\}! P'_i P'_k = \sum_{s=0}^i a^s_{i,k}(q) P'_{k+s} =
\sum_{s=0}^i a^s_i(v,v^k) P'_{k+s}
\ee
where 
\be
\label{asik}
\begin{aligned}
a^s_{i,k}(q) &= \sum_{j=0}^i
\left[ \begin{array}{cc}
    s+j \\ j
    \end{array}
  \right] \prod_{j'=1}^{s+2j} \{k -j + j' \} \\
  &=
  \left[ \begin{array}{cc}
    s+j \\ j
    \end{array}
  \right] \prod_{j'=1}^{j} \{k -j + j' \}
  \prod_{j'=1}^{s} \{k + j' \}
  \prod_{j'=1}^{j} \{k +s + j' \}
\end{aligned}  
\ee
is divisible by $\{k+1\} \dots \{k+s\}$ and $a^s_i(v,u) \in \BZ[v^{\pm 1},u^{\pm 1}]$
are Laurent polynomials that satisfy $a^s_{i,k}(q)=a^s_i(v,v^k)$ 
for all $0 \leq s \leq i \leq k$.

This, together with~\eqref{VnP'} imply that for all $\ell \leq k$ we have
\be
\label{VellP'k}
\sV_\ell P'_k = \sum_{i=0}^\ell \gamma^i_{\ell,k}(q) P'_{k+i} =
\sum_{i=0}^\ell \gamma^i_\ell(v,v^k) P'_{k+i}
\ee
where $\gamma^i_{\ell,k}(q)$ are divisible by $\{k+1\} \dots \{k+i\}$, and
$\gamma^i_\ell(v,u) \in \BZ[v^{\pm 1},u^{\pm 1}]$ are Laurent polynomials that
satisfy  $\gamma^i_{\ell,k}(q)=\gamma^i_\ell(v,v^k)$ for all
$0 \leq i \leq \ell \leq k$.


We end this section by recalling an integrality statement for the colored Jones
polynomial of a link in $S^3$. This is a key ingredient for the integrality statements
of the WRT invariant discussed in Section~\ref{sub.WRT} and ultimately for the
definition of the map~\eqref{vphi}. 

Recall that an oriented 2-component link $(K,K')$ has well-defined linking numbers
\newline
$\lk(K,K')=\lk(K',K) \in \BZ$, and if in addition it is framed, then each component
has has a self-linking number $\lk(K,K), \,\, \lk(K',K') \in \BZ$. This extends to a
linking matrix $(\lk(L_i, L_j))_{i,j=1}^r$ (always symmetric) associated to a framed
oriented link $L$ with components $L_1, \dots, L_r$.

The integrality statement in the next theorem when $L$ is the empty link was proven
by Habiro~\cite[Thm.8.2]{Habiro:WRT}, and its extension in the presence of a link $L$
was given in~\cite[App.A]{BL:SO3}, although not explicitly stated. 

\begin{theorem}
\label{thm.integral1}
Fix two disjoint, framed, oriented links
$L'=(L'_1, \dots, L'_s)$ and $ L=(L_1, \dots, L_r)$ in $S^3$ 
colored by $(P'_{k_1}, \dots, P'_{k_s})$ and $(\sV_{\ell_1}, \dots, \sV_{\ell_r})$,
respectively, for nonnegative integers $k_i$ and $\ell_j$. Suppose that
\be
\label{Leven}
\lk(L'_i,L'_{i'})=0, \,\,\, (i,i'=1,\dots, s), \qquad
\ve_i:=\sum_{j=1}^r \lk(L'_i, L_j) \ell_j \in 2 \BZ, \,\,\, (i=1,\dots,s) \,.
\ee
Then we have
\be
J_{L'\sqcup L}(P'_{k_1}, \dots, P'_{k_s}, \sV_{\ell_1}, \dots, \sV_{\ell_r})
\in q^{f/4} \frac{(q^{k+1};q)_{k+1}}{1-q}\BZ[q^{\pm 1}]
\ee
where $k = \max \{ k_1, \dots, k_s\}$ and 
\be
f = \sum_{i,j=1}^r \lk(L_i, L_j) \ell_i \ell_j + 2 \sum_{i=1}^r (\lk(L_i, L_i) +1)\ell_i
+ \sum_ {i=1}^s \frac{k_i(k_i-1)}{2} \in \BZ \,.
\label{eq.fff}
\ee
\end{theorem}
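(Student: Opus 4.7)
The plan is to reduce to the case $L=\emptyset$, which is Habiro's Theorem~8.2 in~\cite{Habiro:WRT}. The bridge between the $\sV$-colored components of $L$ and the $P'$-colored components of $L'$ is the expansion~\eqref{VnP'} of $\sV_{\ell_j}$ in the $P'$-basis. Applying it to each $\sV_{\ell_j}$ in turn and using linearity~\eqref{eq.linear}, I would rewrite
\[
J_{L'\sqcup L}(P'_{k_1},\dots,P'_{k_s},\sV_{\ell_1},\dots,\sV_{\ell_r})
\;=\;\sum_{i_1=0}^{\ell_1}\!\cdots\!\sum_{i_r=0}^{\ell_r}
\Big(\prod_{j=1}^{r}\Big[\begin{smallmatrix}\ell_j+i_j+1\\ 2i_j+1\end{smallmatrix}\Big]\{i_j\}!\Big)\,
J_{L'\sqcup L}(P'_{k_1},\dots,P'_{k_s},P'_{i_1},\dots,P'_{i_r}),
\]
so that every component is now colored by a $P'$. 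For such a fully $P'$-colored link, the required Habiro-type integrality (with divisibility by $(q^{k+1};q)_{k+1}/(1-q)$ and framing correction controlled by the linking matrix) is exactly the statement extracted in~\cite[App.~A]{BL:SO3}.

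Next, I would track the framing correction $q^{f/4}$ by splitting $f$ into its three contributions in~\eqref{eq.fff}: the quadratic form $\sum \lk(L_i,L_j)\ell_i\ell_j$ absorbs the writhe and mutual linking of the $\sV$-colored components of $L$; the linear term $2\sum(\lk(L_i,L_i)+1)\ell_i$ absorbs the $R$-matrix normalization shifts produced by the expansion~\eqref{VnP'}; and the last sum $\sum k_i(k_i-1)/2$ is the intrinsic framing contribution of the $P'$-colored components of $L'$ coming directly from Habiro. The hypothesis $\lk(L'_i,L'_{i'})=0$ ensures that the $L'$ components enter the calculation as algebraically split, while the parity condition $\varepsilon_i\in2\BZ$ guarantees that the only potential source of $q^{1/4}$ powers — namely the linkings between $L'_i$ and $L$ weighted by the $\ell_j$'s — combine into an integer power of $q^{1/2}$, so the final answer has no half-integer exponent of $q^{1/2}$ outside the explicit $q^{f/4}$.

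The main obstacle will be the divisibility claim: after expansion the ``maximum color'' of the all-$P'$ link is $K=\max\{k_i,i_j\}$, which may exceed $k$, so a naive application of Habiro would only yield divisibility by $(q^{K+1};q)_{K+1}/(1-q)$, not by $(q^{k+1};q)_{k+1}/(1-q)$. The resolution is that, in any term with some $i_j>k$, the refined product rule~\eqref{VellP'k} (together with its iteration via~\eqref{Pasik}) shows that the corresponding coefficient $\gamma^{i_j-k}_{\ell_j,k}(q)$ is divisible by $\{k+1\}\{k+2\}\cdots\{i_j\}$; up to a power of $v$ this is $(q^{k+1};q)_{i_j-k}$, which is exactly the factor needed to pass from $(q^{i_j+1};q)_{i_j+1}$ back to $(q^{k+1};q)_{k+1}$. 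The technical heart of~\cite[App.~A]{BL:SO3} that I would rely on is precisely this cancellation: the excess $P'$-divisibility coming from the $R$-matrix for a higher-color crossing on a $P'$-component is always of the form $\{k+1\}\cdots\{K\}$, matching the shortfall $(q^{K+1};q)_{K+1}/(q^{k+1};q)_{k+1}$ and collapsing the whole sum into $q^{f/4}\cdot\tfrac{(q^{k+1};q)_{k+1}}{1-q}\BZ[q^{\pm1}]$ as claimed.
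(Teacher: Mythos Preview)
Your approach diverges substantially from the paper's, and it has a real gap. The paper's proof is a one-line citation: the case where every $\ell_i$ is even is exactly \cite[Thm.~A.1]{BL:SO3}, and the authors simply observe that the proof there (via \cite[Thm.~A.3]{BL:SO3}) uses only the parity of $\varepsilon_i=\sum_j \lk(L'_i,L_j)\ell_j$, not of each $\ell_i$ individually, so it goes through verbatim under the weaker hypothesis. No expansion of $\sV_{\ell_j}$ into the $P'$-basis is performed.

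Your proposed reduction to the $L=\emptyset$ case runs into two problems. First, once you replace each $\sV_{\ell_j}$ by a combination of $P'_{i_j}$'s via~\eqref{VnP'}, the resulting all-$P'$-colored link $L'\sqcup L$ is no longer algebraically split: the components $L_j$ can link one another and can link the $L'_i$. Habiro's Theorem~8.2 requires zero pairwise linking among the $P'$-colored components, so it does not apply, and citing \cite[App.~A]{BL:SO3} for ``the fully $P'$-colored link'' is circular, since that appendix treats precisely the mixed $P'/\sV$ situation you started from, not an all-$P'$ link with arbitrary linking.

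Second, your fix for the divisibility shortfall conflates two different expansions. The coefficients $\gamma^i_{\ell,k}$ with the divisibility $\{k+1\}\cdots\{k+i\}$ come from~\eqref{VellP'k}, which expresses the \emph{product} $\sV_\ell\cdot P'_k$ on a \emph{single} component. What actually appears when you expand $\sV_{\ell_j}$ on the separate component $L_j$ is~\eqref{VnP'}, whose coefficients $\bigl[\begin{smallmatrix}\ell_j+i_j+1\\2i_j+1\end{smallmatrix}\bigr]\{i_j\}!$ carry no factor of $\{k+1\}\cdots\{i_j\}$ tied to the maximal $L'$-color $k$. So the cancellation you invoke to pass from $(q^{K+1};q)_{K+1}$ back to $(q^{k+1};q)_{k+1}$ is not available from these formulas; the genuine argument in \cite[App.~A]{BL:SO3} works instead at the level of universal invariants and integral bases of the quantum group, not by a naive basis change in $\Rep$.
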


We will call a colored framed oriented link $L' \sqcup L$ in $S^3$ \emph{even}
if it satisfies the condition~\eqref{Leven}.

\begin{proof}
When each $\ell_i$ is even (in which case each $\varepsilon_i$ is clearly even),
the theorem was stated  as~\cite[Theorem A.1]{BL:SO3} and proved there.
However the proof does not need each $\ell_i$ to be even, it requires only
$\varepsilon_i$ to be even. See~\cite[Theorem A.3]{BL:SO3} where $\varepsilon_i$
is the same as our $\varepsilon_i$. Hence the proof given in \cite[Appendix]{BL:SO3}
works also for our case.
\end{proof}

\subsection{Skein theory}
\label{sub.skein}

In this section we review some basic properties of skein theory and the well-known
relation between the Kauffman bracket and the colored Jones polynomial. 

Suppose $M$ is an oriented 3-manifold with possibly non-empty boundary $\pM$.
The skein module $\SM$, introduced by Przytycki~\cite{Przytycki} and
Turaev~\cite{Tu:conway}, is  the $\Zq$-module generated by isotopy classes of
framed unoriented links in $M$  modulo the Kauffman relations~\cite{Kauffman}

\begin{align}
\label{eq.skein0} \cross \ &= \ q^{1/4}\resoP + q^{-1/4} \resoN\\
\label{eq.loop0}  \trivloop\  &=\  (-q^{1/2} -q^{-1/2})\emptyr.
\end{align}
By convention, the empty set is considered a framed link. Note that our $q^{1/4}$
is equal to $A$ in \cite{Kauffman}. 

The skein modules have played an important role in low-dimensional topology and
quantum topology and they serve as a bridge between classical topology and quantum
topology. The skein modules have connections to the $\mathrm{SL}_2(\BC)$-character
variety~\cite{Bullock,PS1}, the quantum group of $\mathrm{SL}_2(\BC)$, see
eg~\cite{CL,LS:SLn}, the Witten-Reshetikhin-Turaev topological quantum field
theory~\cite{BHMV}, the quantum Teichm\"uller theory, see
e.g.~\cite{Kashaev,CF,BW,Le:QT}, and the theory of quantum cluster algebras
~\cite{Muller}. 
 
When $M=S^3$, Kauffman's theorem says that $\SM \cong \Zq$, with the isomorphism
given by $L \to \la L \ra$, where $\la L \ra$ is the Kauffman bracket of the
framed unoriented link $L$.
The relation between the Jones polynomial and the Kauffman bracket is given by
the following.

\begin{lemma}
\label{r.JonesKauff}
Let $L=(L_1, \dots, L_r)$ be a framed unoriented $k$-component link.  Then
\be
\label{JLbracket}
\la L \ra = (-1)^{\sum_{i=1}^r\lk(L_i,L_i)+1) } J_L(\sV, \dots, \sV) \,.
\ee
\end{lemma}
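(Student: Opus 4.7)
The plan is to compare both sides as skein invariants in $S^3$, reducing to an inductive argument on crossings, since both $\langle\cdot\rangle$ and $J_\cdot(\sV,\ldots,\sV)$ are invariants of framed unoriented links that are multiplicative under disjoint union and determined by their skein relations together with their value on framed unknots.

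First, I would identify the skein relation satisfied by $J_L(\sV,\ldots,\sV)$. Since $\sV\otimes\sV=\sV_0\oplus\sV_2$ as $U_q(\fsl_2)$-modules, the $R$-matrix acting on $\sV\otimes\sV$ has exactly two distinct eigenvalues and therefore satisfies a quadratic minimal polynomial. Translating this polynomial identity into a local crossing-change formula on $\sV$-colored tangle diagrams yields, up to the choice of half-twist normalization consistent with $A=q^{1/4}$, exactly the Kauffman skein relation~\eqref{eq.skein0}. In parallel, the trivial $\sV$-colored loop evaluates to the quantum dimension $[2]=v+v^{-1}$ under $J$, whereas it contributes $-v-v^{-1}$ under $\langle\cdot\rangle$ by~\eqref{eq.loop0}; this discrepancy accounts for one sign per connected component.

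Next, I would verify~\eqref{JLbracket} directly on an $n$-framed unknot $U_n$. On the Kauffman side, $\langle U_n\rangle=(-A^3)^n(-A^2-A^{-2})=(-1)^{n+1}A^{3n}[2]$ by~\eqref{eq.loop0} together with the standard framing correction. On the colored Jones side, $J_{U_n}(\sV)=A^{3n}[2]$, where $A^{3n}$ is the ribbon-twist framing factor on $\sV$ in the normalization compatible with the paper's convention $A=q^{1/4}$. The ratio is $(-1)^{n+1}=(-1)^{\lk(U_n,U_n)+1}$, matching the exponent in~\eqref{JLbracket} for $r=1$. Multiplicativity of both sides under disjoint union then extends this to framed unlinks, producing the correct exponent $\sum_i(\lk(L_i,L_i)+1)=\sum_i\lk(L_i,L_i)+r$.

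Finally, I would extend to an arbitrary framed link by induction on the number of crossings of a diagram representation of $L$, using the first step to resolve a chosen crossing identically on both sides; the loop-value discrepancy is absorbed by the $+1$-per-component summand, and the framing discrepancy by $\sum_i\lk(L_i,L_i)$. The main technical obstacle I foresee is purely bookkeeping: choice of square root of $q$, direction of the half-twist, and sign of the $R$-matrix on the trivial summand $\sV_0$ must line up so that the $\sV$-colored RT invariant satisfies \emph{exactly} Kauffman's relation~\eqref{eq.skein0}, and not its sign-swapped twin. This is resolved by the paper's fixed identification $A=q^{1/4}$ together with the standard ribbon structure on $U_q(\fsl_2)$-modules used in the RT state-sum definition of $J_L$.
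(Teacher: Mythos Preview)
Your approach is genuinely different from the paper's.  The paper does not argue by skein induction at all; it quotes a formula of Kirby--Melvin,
\[
\langle L\rangle \;=\; i^{\sum_{i,j}\lk(L_i,L_j)}\,J_L(\sV,\dots,\sV)\Big|_{q^{1/4}\to -\,i\,q^{1/4}},
\]
and then uses the strong integrality $J_L(\sV,\dots,\sV)\in q^{f/4}\,\BZ[q^{\pm1}]$ (the case $L'=\emptyset$ of Theorem~\ref{thm.integral1}) to evaluate the substitution: it is the identity on $\BZ[q^{\pm1}]$ and sends $q^{f/4}$ to $(-i)^f q^{f/4}$.  A one-line computation then collapses $i^{\sum\lk}\cdot(-i)^f$ to the sign in~\eqref{JLbracket}.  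So the paper's argument is essentially arithmetic, not skein-theoretic.

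Your sketch has a concrete gap.  You assert simultaneously that $J_L(\sV,\dots,\sV)$ satisfies \emph{exactly} the Kauffman relation~\eqref{eq.skein0}, that its loop value is $+[2]=A^2+A^{-2}$, and that its framing factor on a positive kink is $A^{3}$.  These three statements are mutually inconsistent: applying~\eqref{eq.skein0} to a positive kink with loop value $+[2]$ yields
\[
A\,(A^2+A^{-2})+A^{-1}\;=\;A^3+2A^{-1},
\]
not $A^3$.  In fact the crossing relation obeyed by $J_L$ differs from Kauffman's by fourth roots of unity in the coefficients---this is precisely what the Kirby--Melvin substitution $A\mapsto -iA$ encodes---so the ``same skein relation on both sides'' premise of your induction is false as stated.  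To repair the argument you would need to identify the actual local relation for $J_L$ and then check that multiplying by $(-1)^{\sum_i(\lk(L_i,L_i)+1)}$ converts it into~\eqref{eq.skein0}; since each smoothing changes both the number of components and the self-linking numbers, this is a genuine computation that your proposal does not carry out.  The paper's integrality route bypasses all of this bookkeeping in two lines.
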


\begin{proof}
The statement is well-known, but for completeness we include a proof here. By 
\cite[Cor.4.13]{KM}, 
\be
\label{eq23}
\la L \ra = i^{ \sum_{i,j=1}^r \lk(L_i, L_j) } J_L(\sV, \dots, \sV)\Big |_{q^{1/4} \to -i q^{1/4}} 
\ee
By the  strong integrality of the Jones polynomial,
see~\cite[Theorem 2.2]{Le:integrality} or Theorem~\ref{thm.integral1} with
$L'= \emptyset$, we have
\be
J_L(\sV, \dots, \sV) \in q^{f/4} \Zqq, \qquad f
= \sum_{i,j=1}^r \lk(L_i, L_j) + 2 \sum_{i=1}^r(\lk(L_i,L_i)+1) \,.
\ee
The substitution $q^{1/4} \to -i q^{1/4}$ is identity on $\Zqq$ and sends $q^{f/4} $
to $(-i)^f q^f/4$. Hence from \eqref{eq23} we get \eqref{JLbracket}.
\end{proof}

\begin{lemma}
\label{r.JK}
Assume $L'=(L'_1, \dots, L'_s)$ is a colored framed oriented link  in $S^3$,
with colors $x_1, \dots, x_s\in \Rep$. There is a $\Zq$-linear map
$F: \CS(S^3\setminus L') \to \Zq$ such that if $L=(L_1,\dots, L_r)$ is a framed
unoriented $r$-component link in $S^3\setminus L'$, then 
\be
\label{eq.19}
F(L) = (-1)^{\sum_{i=1}^r\lk(L_i,L_i)+1) } J_{L' \sqcup L}( x_1, \dots, x_s, \sV, \dots, \sV ) \,.
\ee
\end{lemma}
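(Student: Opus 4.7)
\textit{Plan.} Define $F$ on the free $\Zq$-module spanned by isotopy classes of framed unoriented links $L\subset S^3\setminus L'$ by the right-hand side of~\eqref{eq.19}: choose an arbitrary orientation on $L$, form the colored Jones polynomial $J_{L'\sqcup L}(x_1,\dots,x_s,\sV,\dots,\sV)$, multiply by $(-1)^{\sum_i(\lk(L_i,L_i)+1)}$, and extend $\Zq$-linearly. This is well-defined on \emph{unoriented} links because $\sV$ is self-dual as a $U_q(\fsl_2)$-module (see the orientation-reversing bullet of Subsection~\ref{sub.jones}), making the colored Jones value orientation-independent on the components of $L$, while the sign prefactor depends only on the orientation-independent self-linking numbers.

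It remains to check that this map descends through the defining Kauffman relations~\eqref{eq.skein0}--\eqref{eq.loop0}, which are local identities inside any small ball $B\subset S^3\setminus L'$. For the loop relation, inserting a $0$-framed trivial unknot $\gamma$ in $B$ multiplies $J_{L'\sqcup L}$ by the quantum dimension $\dim_q\sV=v+v^{-1}$, and the sign factor gains $(-1)^{\lk(\gamma,\gamma)+1}=-1$, producing the required $-(q^{1/2}+q^{-1/2})$. For the crossing relation, let $L_\times$, $L_+$, $L_-$ be three framed unoriented links in $S^3\setminus L'$ agreeing outside $B$ and realizing, inside $B$, the crossing and the two Kauffman smoothings of~\eqref{eq.skein0}. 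Via the Reshetikhin--Turaev functor, $J_{L'\sqcup L_\bullet}(x_\bullet,\sV,\dots,\sV)$ factors as the composition of the local operator assigned to the tangle in $B$ -- an element of $\mathrm{End}_{U_q(\fsl_2)}(\sV\otimes\sV)$ -- with a fixed ``outside'' operator that encodes $(L',x_\bullet)$ and the rest of $L$ but not the tangle in $B$. Hence the desired skein identity reduces to the local endomorphism identity equating the framed, signed braiding on $\sV\otimes\sV$ with $q^{1/4}\,\mathrm{id}+q^{-1/4}\,e$, where $e$ is the cup--cap projector.

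The main obstacle is establishing this local operator identity and verifying that the sign prefactor $(-1)^{\sum_i(\lk(L_i,L_i)+1)}$ transforms compatibly across the two smoothings. For the operator identity, the case $L'=\emptyset$ is precisely the local content of Lemma~\ref{r.JonesKauff}, proved there via the Kirby--Melvin substitution $q^{1/4}\to -iq^{1/4}$; since the identity is intrinsic to $\mathrm{End}_{U_q(\fsl_2)}(\sV\otimes\sV)$ and independent of anything outside $B$, it handles the general case with $L'$ present equally well. For the sign bookkeeping, the component count of $L$ and the total self-framing change across the two smoothings in exactly the same pattern as in the $L'=\emptyset$ case, so the sign corrections combine with the coefficients $q^{\pm 1/4}$ exactly as they did in Lemma~\ref{r.JonesKauff}. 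With both Kauffman relations verified, $F$ descends to a $\Zq$-linear map $\CS(S^3\setminus L')\to\Zq$ satisfying~\eqref{eq.19}.
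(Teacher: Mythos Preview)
Your proof is correct but follows a different route from the paper's. The paper first reduces the colors $x_i$ on $L'$ to the fundamental representation $\sV$ by using the linearity~\eqref{eq.linear} of the colored Jones polynomial together with $\Rep=\Zq[\sV]$, and then the tensor-product/cabling formula~\eqref{eq.tensor} to replace each component of $L'$ by parallel copies colored by $\sV$; once every strand in $L'\sqcup L$ carries $\sV$, Lemma~\ref{r.JonesKauff} identifies $F(L)$ globally with a fixed sign times the Kauffman bracket $\langle L'\sqcup L\rangle$, which satisfies~\eqref{eq.skein0}--\eqref{eq.loop0} by definition. Your approach instead leaves the colors on $L'$ untouched and invokes the locality of the Reshetikhin--Turaev functor to isolate the skein relation as an identity in $\mathrm{End}_{U_q(\fsl_2)}(\sV\otimes\sV)$, independent of the exterior and hence of $L'$ and its colors. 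The paper's argument is more elementary---it needs only multilinearity and cabling, not the tangle-functor machinery---while yours is more conceptual and avoids the color-reduction step entirely; in both cases the essential input is the $L'=\emptyset$ case encoded in Lemma~\ref{r.JonesKauff}.
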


\begin{proof}
We need to show that if we define $F(L)$ by \eqref{eq.19}, then it satisfies
the defining relations \eqref{eq.skein0} and \eqref{eq.loop0}  of the skein module.
Because $x_i\in\Rep= \Zq[\sV]$, the linearity of the colored Jones
polynomial \eqref{eq.linear} reduces the statement to the case when each $x_i$
is a power of $\sV$. Further, the tensor product formula \eqref{eq.tensor}
reduces the statement to the case each $x_i=\sV$, which will be assumed now.

From Lemma~\ref{r.JonesKauff} we have
\be
F(L) = (-1)^{\sum_{j=1}^s\lk(L'_j,L'_j)+1) } \la L' \sqcup L \ra \,.
\ee
As the Kauffman bracket satisfies the relations \eqref{eq.skein0}
and \eqref{eq.loop0} , we conclude that $F(L)$ also satisfies the satisfies
the relations \eqref{eq.skein0} and \eqref{eq.loop0}. 
\end{proof}

\subsection{The WRT invariant}
\label{sub.WRT}

In this section we review some basic properties of the WRT invariant of a framed
colored link $L$ in an oriented 3-manifold $M$ at a root of unity $\xi^{1/4}$
and its lift to an element of the Habiro ring. 

The $\SU(2)$-version $\WRT^{\SU(2)}_{(M,L)}(\xi^{1/4})\in \BC$ of the WRT invariant
was defined by Reshetikhin-Turaev~\cite{RT} as a mathematically rigorous realization
of Witten's TQFT theory. A refined version, the $\SO(3)$-invariant
$\WRT^{\SO(3)}_{(M,L)}(\xi^{1/4}) \in \BC$, was subsequently introduced by Kirby and
Melvin~\cite{KM}  and is defined when $\xi$ is primitive root of unity of odd order.
By~\cite[Prop.5.9]{BCL}, the two invariants agree when $\xi^{1/4}$ is a
primitive root of unity of odd order.

As the next theorem states, the $\SO(3)$-version of the WRT invariant
of a ZHS has slightly better integrality property than its $\SU(2)$-version. 
The following theorem is a generalization of a main result of~\cite{BL:SO3} (itself
generalizing work of Habiro~\cite{Habiro:WRT} as well as work of the
authors~\cite{GL:is,Le:strong}), and shows that the WRT invariant can be lifted to
an invariant in $\habq$ when $M$ is an integral homology 3-sphere. 

\begin{theorem}
\label{thm.surgery}
Fix a framed link $L=(L_1,\dots, L_r)$ colored by $(\sV_{\ell_1}, \dots, \sV_{\ell_r})$
in an integer homology sphere $M$ for nonnegative integers $\ell_i$.
Then, there exists a unique $I_{(M,L)}(q) \in q^{g/4} \hab$
such that for any primitive root of unity $\xi$ of odd order we have
\be
\label{eq.20a}
I_{(M,L)}(q){\big |}_{q^{1/4}=\xi^{1/4}} \, = \, \WRT^{\SO(3)}_{(M,L)}(\xi^{1/4}) \,. 
\ee
Here $\xi^{1/4}$ is any 4-th root of $\xi$,
$I_{(M,L)}(q)$ is given in terms of an even surgery presentation of $(M,L)$
by
\be
\label{eq.21a}
I_{(M,L)}(q) = \sum _{k_1, \dots, k_s=0}^\infty
J_{\mathring L' \sqcup L}(P'_{k_1}, \dots, P'_{k_s}, \sV_{\ell_1},
\dots, \sV_{\ell_r}) \prod_{i=1}^r (-d_i)^{k_i} q^{- d_i k_i(k_i+3)/4} 
\ee
where $\mathring L'$ denotes the result of changing the framing of each component of
$L'$ to $0$, $d_i= \lk(L'_i,L'_i) = \pm 1$ and
\be 
\label{eq.ggg}
g= \sum_{i,j=1}^r \lk(L_i, L_j) \ell_i \ell_j + 2 \sum_{i=1}^r (\lk(L_i, L_i) +1)\ell_i \,.
\ee 
\end{theorem}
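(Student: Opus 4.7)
The approach is to define $I_{(M,L)}(q)$ by the explicit surgery formula~\eqref{eq.21a} and use Theorem~\ref{thm.integral1} as the crucial integrality input that promotes the formal sum into a genuine element of $q^{g/4}\hab$. The first step is to start from an arbitrary surgery presentation of $M$, which exists with linking matrix of determinant $\pm 1$ since $M$ is a ZHS, and, following the Kirby-move technique of~\cite{BL:SO3}, repeatedly blow up $\pm 1$-framed unknots to obtain an \emph{even} presentation $L' = (L'_1,\dots,L'_s)$ satisfying~\eqref{Leven}: diagonal linking matrix with entries $d_i = \pm 1$ in $S^3$, and parities $\varepsilon_i = \sum_j \lk(L'_i,L_j)\ell_j$ all even. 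This is exactly what is needed to apply Theorem~\ref{thm.integral1} to $\mathring L' \sqcup L$.

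The second step is to establish convergence of~\eqref{eq.21a} in $q^{g/4}\hab$. Setting $k = \max_i k_i$, Theorem~\ref{thm.integral1} shows that the colored Jones factor of each summand lies in $q^{f/4}(q^{k+1};q)_{k+1}/(1-q)\cdot\Zqq$ with $f$ from~\eqref{eq.fff}. A direct computation, combining $f$ with the twist $\prod_i(-d_i)^{k_i}q^{-d_i k_i(k_i+3)/4}$ and using $d_i^2=1$ together with $\varepsilon_i\in 2\BZ$, shows that the total $q$-power of each summand differs from $g/4$ by an integer. Hence after factoring out $q^{g/4}$ every term lies in $(q^{k+1};q)_{k+1}/(1-q)\cdot\Zqq$. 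Since the ideals $\bigl((q^{k+1};q)_{k+1}\bigr)_{k\geq 0}$ are cofinal in the defining filtration $\bigl((q;q)_n\bigr)_{n\geq 0}$ of $\hab$ by Habiro~\cite{Habiro:completion}, the partial sums are Cauchy and converge to a well-defined element $I_{(M,L)}(q) \in q^{g/4}\hab$.

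The third step is to verify the matching~\eqref{eq.20a} and deduce uniqueness. At $q^{1/4}=\xi^{1/4}$ with $\xi$ primitive of odd order $N$, the divisibility by $(q^{k+1};q)_{k+1}$ forces every summand with $k=\max_i k_i$ sufficiently large to vanish, so the infinite series collapses to a finite sum. One then recognizes this finite sum, after accounting for the cabling factor $(-d_i)^{k_i}q^{-d_i k_i(k_i+3)/4}$ that restores the original $d_i$-framing on each $L'_i$ from the $0$-framing used in $\mathring L'$, as precisely the Kirby--Melvin surgery formula~\cite{KM} for $\WRT^{\SO(3)}_{(M,L)}(\xi^{1/4})$. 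Uniqueness of $I_{(M,L)}(q)$, and consequently its independence of the chosen even presentation, is then immediate from Habiro's theorem~\cite{Habiro:completion,Habiro:WRT} that the joint evaluation $\hab \hookrightarrow \prod_\xi \BC$ at odd-order roots of unity is injective.

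The main obstacle is the convergence argument in Step~2: one must delicately balance the positive divisibility furnished by $(q^{k+1};q)_{k+1}/(1-q)$ against the negative powers $q^{-d_i k_i(k_i+3)/4}$ coming from the inverse ribbon element attached to each surgery component, and verify that the resulting series lies in the Habiro completion rather than merely in some weaker $q$-adic completion. The even-surgery condition $\varepsilon_i\in 2\BZ$ is precisely what activates the refined integrality of Theorem~\ref{thm.integral1} (with its extra factor $(q^{k+1};q)_{k+1}$ beyond the cruder statement for purely even colorings) that makes this balance work.
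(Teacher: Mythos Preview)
Your proposal is correct and follows essentially the same route as the paper: choose an even surgery presentation, feed it into Theorem~\ref{thm.integral1} to get divisibility by $(q^{k+1};q)_{k+1}/(1-q)$, deduce convergence in $q^{g/4}\hab$, evaluate at odd-order roots of unity to match the $\SO(3)$ WRT invariant, and conclude uniqueness from Habiro's injectivity theorem.

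One small difference worth flagging is your Step~1. You obtain the even presentation by Kirby moves on the surgery link $L'$ (blow-ups), citing~\cite{BL:SO3}. The paper instead fixes from the outset a diagonal $\pm1$ surgery presentation for the ZHS (a classical fact) and then achieves the parity condition $\varepsilon_i\in 2\BZ$ by sliding the \emph{colored} link $L$, not $L'$: sliding $L_j$ over the attaching disk of $L'_i$ shifts $\lk(L'_i,L_j)$ by $\pm1$ while representing the same isotopy class in $M$. This is more direct than your description, and note that blow-ups of isolated $\pm1$-unknots alone neither diagonalize the linking matrix of $L'$ nor alter the parities $\varepsilon_i$ of the existing components; you would need handle slides as well. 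The end result is the same, but the paper's slide-of-$L$ maneuver is the cleaner mechanism here.

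A second minor point: your closing paragraph frames the ``main obstacle'' as balancing the negative powers $q^{-d_ik_i(k_i+3)/4}$ against the cyclotomic divisibility. Since $q$ is a unit in $\hab$ (the paper invokes \cite[Prop.~7.1]{Habiro:completion} for exactly this), arbitrary negative powers of $q$ are harmless; the only genuine issue is membership in the cyclotomic filtration, which you handle correctly via cofinality of $\bigl((q^{k+1};q)_{k+1}\bigr)$ with $\bigl((q;q)_n\bigr)$. For the evaluation step, the paper cites the explicit truncation bound $k>(N-1)/3$ from \cite[Lem.~2.1]{Le:strong} and \cite[Formula~(9)]{Le:strong} rather than~\cite{KM}, but your appeal to the surgery formula is equivalent.
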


In the proof of the theorem, by a slight abuse of notation, denote elements of $\habq$
by $F(q)$ although technically they are functions of $q^{1/4}$. This notation is also
common in the theory of modular forms as well as in proper $q$-hypergeometric series
such as Nahm sums. 

\begin{proof}
To begin with, $q$ is invertible in $\hab$ by~\cite[Prop.7.1]{Habiro:completion},
which implies that
\be
\label{eq.hab2}
\hab = \varprojlim \BZ[q^{\pm 1}]/((q;q)_n).
\ee

The uniqueness of $I_{(M,L)}(q)$ follows from the fact that an element of the Habiro
ring is uniquely determined by values at roots of 1 of odd order, as follows
from Habiro~\cite{Habiro:completion}. Let us prove the existence of $I_{(M,L)}(q)$. 

Since $M$ is a ZHS, there exists a framed link $L'=(L'_1, \dots, L'_s)$ in the 3-sphere
$S^3$ whose linking matrix is diagonal with diagonal entries
$d_i:= \lk(L'_i,L'_i) = \pm 1$ and a colored framed link $L=(L_1,\dots, L_r)
\subset S^3\setminus L'$ colored by $(\sV_{\ell_1}, \dots, \sV_{\ell_r})$ respectively,
such that surgery on $S^3$ along $L'$ transforms $(S^3, L)$ to $(M, L)$.
Sliding a component $L_j$ of $L$ over a gluing disk corresponding to the surgery along
$L'_i$ changes $\lk(L'_i, L_j)$ by $\pm 1$, but the resulting
link and the old link are isotopic in $M$. It follows that after some slidings,
we can assume that $\mathring L' \sqcup L$
is even (i.e., it satisfies~\eqref{Leven}), where
$\mathring L'$ denotes the result of changing the framing of each component of
$L'$ to $0$. 

In this case, Theorem \ref{thm.integral1} gives
\be
\label{JLL'f}
J_{\mathring L' \sqcup L}(P'_{k_1}, \dots, P'_{k_s}, \sV_{\ell_1}, \dots, \sV_{\ell_r})
\in  q^{f/4} \frac{( q^{k+1};q)_{k+1}}{1-q} \BZ[q^{\pm 1}]
\subset  q^{f/4} (q;q)_k \BZ[q^{\pm 1}] 
\ee
where $k= \max \{ k_1, \dots, k_s\}$ and $f$ is given by~\eqref{eq.fff}.

Using the explicit formula of $f$, it is easy to see  
that the summand of the right hand side of \eqref{eq.21a} is in 
$q^{g/4} (q;q)_k\,  \BZ[q^{\pm 1}]$. This, together with~\eqref{eq.hab2} implies
that $I_{(M,L)}(q) \in q^{g/4}\,  \hab.$  

Assume $\xi$ is a root of unity of odd order $N >1$. By \cite[Lemma 2.1]{Le:strong},
for $k> (N-1)/3$ we have
\be
\frac{( \xi^{k+1};\xi)_{k+1}}{1-\xi}  =0 \,.
\ee
Hence 
\be
I_{(M,L)}(q)\Big|_{q^{1/4}=\xi^{1/4}} =I_{(M,L)}^{\le (N-1)/3}(q)\Big|_{q^{1/4}=\xi^{1/4}},
\ee
where $I_{(M,L)}^{\le (N-1)/3}(q)$ is defined using the right hand side of \eqref{eq.21a}
with the sum truncated at each $k_i \le (N-1)/3$. On the other hand, we have
\be 
\WRT^{\SO(3)}_{(M,L)}(\xi^{1/4}) = I_{M,L}^{\le (N-1)/3}\Big|_{q^{1/4}=\xi^{1/4}} \,.
\ee
This was proved in~\cite[Formula (9)]{Le:strong}, where the case
$L=\emptyset$ is considered. However, the same proof also works when $L$ is a
colored framed link. Thus concludes the proof of~\eqref{eq.20a}.
\end{proof}

\begin{remark}
In her thesis~\cite{Buhler} Buhler showed that $I_{(M,L)}(q)$ recovers also the $\SU(2)$
WRT-invariant. Explicitly, for all complex roots of unity $\xi^{1/4}$, we have
\be
\label{eq.20b}
I_{(M,L)}(q)\big| _{q^{1/4} = \xi^{1/4}  } = \WRT^{\SU(2)}_{(M,L)}(\xi^{1/4}) \,. 
\ee
\end{remark}


\section{The map $\vphi$}
\label{sec.vphi}

In this section we define the map~\eqref{vphi} and prove its properties
stated in Theorem~\ref{thm.1} and its relation with the descendant knot invariants
stated in Theorem~\ref{thm.desc}.

\subsection{Definition and properties}
\label{sub.vphi}

For a framed unoriented link $L$ in $M$ define 
\be
\label{defvphi2}
\varphi(L) = (-1)^{\sum_{i=1}^r (\lk(L_i, L_i)+1)} I_{(M,L)}(q) 
\ee
where each component of $L$ is colored by the fundamental module $\sV=\sV_1$. 
In other words, $\vphi(L)$ and $I_{(M,L)}(q)$ agree up to a sign.
We will show that $\varphi(L)$ satisfies the relations \eqref{eq.skein0}
and~\eqref{eq.loop0} and hence it descends to a $\Zq$-linear map from the skein
module $\SM$ to $\habq$.

To do so, we fix framed links $L'$ and $L$ in $S^3$ of $s$
and $k$ components such that surgery along $L'$ transform $(S^3,L)$ to $(M,L)$, 
as in the proof of Theorem~\ref{thm.surgery}. Further, we color each component of $L$
with $\sV$. Using Equation~\eqref{eq.21a} we have
\begin{small}
\be
\label{eq.21b}
\varphi(L) =  \sum _{k_1,\dots,k_s=0}^\infty
\left[ (-1)^{\sum_{i=1}^r (\lk(L_i, L_i)+1)} J_{\mathring L' \sqcup L}(P'_{k_1}, \dots, P'_{k_s}, \sV,
\dots, \sV)\right] \prod_{i=1}^r (-d_i)^{k_i} q^{- d_i k_i(k_i+3)/4}. 
\ee
\end{small}
 By Lemma \ref{r.JK}, each term in the square bracket, as a function of $L$,
satisfies the defining relations \eqref{eq.skein0} and \eqref{eq.loop0}.
Hence~\eqref{eq.21b} implies that $\vphi(L)$ also satisfies
the same relations. Thus $\vphi$ descends to a $\Zq$-linear map from $\SM$ to $\habq$.

The finiteness of the rank of the image $V_M$ of $\vphi$ follows from the finiteness
of the rank of the skein module of a closed manifolds~\cite{Gunningham-Jordan}.
This completes the proof of Theorem~\ref{thm.1}.
\qed

\subsection{The relation with descendants}
\label{sub.desc}

In this section we prove Theorem~\eqref{thm.desc} in several stages. To begin
with, the Glebsh-Gordan decomposition formula implies that for every positive integer
$m$, $\sV^m$ is a $\BZ$-linear combination of $\sV_\ell$ for $\ell=1,\dots,m$ and
vice-versa.  This, together with the TQFT axioms and
Equation~\eqref{defvphi2} imply that
\be
\Span \{ \vphi(K^{(m)}) \,|\, m \geq 0 \} = \Span \{ I_{(M,K_m)}(q) \,|\, m \geq 0 \}
\ee
where $K_m$ denotes the framed knot $K$ in the ZHS $M$ colored by $\sV_m$. Thus,
we will prove that
\be
\Span \{ I_{(M,K_m)}(q) \,|\, m \geq 0 \} \subset D_{b,K} \,.
\ee

To begin with with fix an integer $b$ and a knot $K$ in $S^3$, and let $M=S^3_{K,-1/b}$
denote the corresponding ZHS. After possibly reversing the orientation
(which simply changes $q$ to $q^{-1}$ and does not affect the result), we may assume
that the Dehn filling coefficient is $-1/b$ for $b >0$. To make
the ideas as clear as possible, we start with the case $b=1$, i.e., $M=S^3_{K,-1}$
is obtained by $-1$ surgery on a 0-framed knot $K$ in $S^3$. In this case, 
the WRT invariant given in~\eqref{eq.21a} can be written in the form
\be
I_M(q) = \sum_{k \geq 0} J_K(P'_k)(q) q^{\frac{k(k+3)}{4}}
\ee
where
\be
\label{Pkint}
J_K(P'_k)(q) q^{\frac{k(k+3)}{4}} \in (q^{k+1};q)_{k+1} \BZ[q^{\pm 1}] \,,
\ee
implying that $I_M(q) \in \hab$. The above formula, although by no means
canonical, has a 1-parameter deformation (called a ``descendant''
in~\cite{GZ:kashaev,GZ:qseries,Wheeler:thesis})
\be
\label{IMm}
I^{(m)}_M(q) = \sum_{k \geq 0} J_K(P'_k)(q) q^{\frac{k(k+3)}{4}} \, q^{k m}, \qquad
(m \in \BZ)
\ee
where $I^{(m)}_M(q) \in \hab$, whose $\BZ[q^{\pm 1/4}]$-span
$$
D_{1,K}:=\Span_{\BZ[q^{\pm 1/4}]}\{1, I^{(m)}_{M}(q) \,\, | \,\, m \in \BZ \}
$$
defines the module $D_{1,K}$. We need to prove that $I_{(M,K^{(\ell)})}(q) \in
D_{1,K}$ for all $\ell \geq 0$.

When $\ell=0$, this is clear. For $\ell=1$, 
using the definition of $P_k$ as the product of $k$ terms, it follows that for
all $k$, we have
\be
\label{VP}
\sV P_k = P_{k+1} + (v^{2k+1}+v^{-2k-1}) P_k \,,
\ee
which after dividing by $\{k\}!$ implies that
\be
\label{VP'}
\sV P'_k = (v^{k+1}-v^{-k-1}) P'_{k+1} + (v^{2k+1}+v^{-2k-1}) P'_k \,.
\ee
This and the operator properties of the colored Jones polynomial reviewed in
Section~\ref{sub.jones} axioms imply that
\be
\label{JKK}
J_{K,K}(P'_k, \sV) = (v^{k+1}-v^{-k-1})
J_K(P'_{k+1}) + (v^{2k+1}+v^{-2k-1}) J_K(P'_k) \,,
\ee
which together with~\eqref{eq.21a} give 
\be
\label{IMKK}
I_{(M,K)}(q) = \sum_{k \geq 0}
((v^{k+1}-v^{-k-1}) J_K(P'_{k+1})(q) + (v^{2k+1}+v^{-2k-1})
J_K(P'_k)(q)) q^{\frac{k(k+3)}{4}} \,.
\ee
After splitting the sum into two sums, and taking care of the boundary terms,
we obtain that $I_{(M,K)}(q)$ can be written in terms of the ``descendant'' sums
as follows:
\be
\label{IMK1} 
I_{(M,K)}(q) = v^{-1}(-1+ I^{(0)}_M(q)) -v^{-3}q(-1+I^{(-1)}_M(q))+
v I^{(1)}_M(q) + v^{-1} I^{(-1)}_M(q) \,.
\ee
More generally, for a positive integer $m$, Equation~\eqref{VP'} and induction
(or Equation~\eqref{VellP'k}) implies that for all $k \geq m$, 
\be
\sV_m P'_k = \sum_{i=0}^m \gamma^i_m(v,v^k) P'_{k+j}
\ee
for $\gamma^i_m(v,u) \in \BZ[v^{\pm 1}, u^{\pm 1}]$. This and the TQFT axioms
implies that
\be
\label{IKMm}
I_{(M,K_m)}(q)
= \sum_{k \geq 0} \sum_{j=0}^m \gamma^i_m(v,v^k)
 J_K(P'_{k+j})(q) q^{\frac{k(k+3)}{2}}
\in D_{1,K} \,.
\ee
This concludes the inclusion~\eqref{V2} for $b=1$. The finiteness of the
rank of $D_{1,K}$ follows from the fact that $P_K(P'_k)$ is
$q$-holonomic~\cite{GL:qholo}, and so is the summand in~\eqref{IMm}, and hence
the sum~\cite{GL:survey}. 

We next prove Theorem~\ref{thm.desc} when $b=2$, where a new difficulty
(absent from the case of $b=1$) appears. Fix the ZHS $M=S^3_{K,-1/2}$ obtained by
$-1/2$ surgery on a knot $K$ in $S^3$. In this case, Beliakova-Le show (see
Theorem 7 and the example in the end of Section 3 of~\cite{BL}) that
\be
\label{IM2}
I_M(q) = \sum_{k \geq \ell \geq 0} J_K(P'_k) \a_{k,\ell}(q), \qquad
\a_{k,\ell}(q)= q^{\frac{k(k+3)}{4} + \ell(\ell+1)}
\frac{(q;q)_k}{(q;q)_\ell (q;q)_{k-\ell}} 
\ee
whose descendants are the 2-parameter family given by
\be
\label{IM2desc}
I_M^{(m_0,m_1)}(q) = \sum_{k \geq \ell \geq 0} J_K(P'_k) \a_{k,\ell}(q)
q^{k m_0 + \ell m_1}, \qquad (m_1,m_2 \in \BZ) 
\ee
whose $\BZ[q^{\pm 1/4}]$-span together with $1$ is denoted by $D_{2,K}$.

\noindent
The proper $q$-hypergeometric function $\a_{k,\ell}(q)$ satisfies the
linear $q$-difference equations
\be
\label{arec}
\begin{aligned}
(1-q^{k+1-\ell}) \a_{k+1,\ell} &= q^{\frac{k}{2}+1} (1-q^{k+1})\a_{k,\ell}(q) \\
(1-q^{\ell+1})\a_{k,\ell+1}(q) &=  q^{2\ell+2}(1-q^{k-\ell}) \a_{k,\ell}(q) \,.
\end{aligned}
\ee
Equations~\eqref{JKK} and~\eqref{IM2} give
\be
I_{(M,K)}(q) = \sum_{k \geq \ell \geq 0}
((v^{k+1}-v^{-k-1}) J_K(P'_{k+1}) + (v^{2k+1}+v^{-2k-1}) J_K(P'_k))
\a_{k,\ell}(q) \,.
\ee
As was done in Equation~\eqref{IMKK}, splitting the sum into two sums, it suffices
to show that each is in $D_{2,K}$. The second one is obvious, whereas for the first
one, use the first linear $q$-difference equation in~\eqref{arec} to write
$v^{k+1} \a_{k,\ell}(q)$ in terms of $\a_{k+1,\ell}(q)$. We then sum over $k$
and $\ell$ and after taking care of the the boundary terms, to deduce that the first
sum and hence $I_{(M,K)}(q)$ is in $D_{2,M}$. 

More generally, we use Equation~\eqref{VellP'k} to deduce that
\be
\label{IKMm12}
I_{(M,K_m)}(q)
= \sum_{k \geq \ell \geq 0}
\sum_{i=0}^m \gamma^i_\ell(v,v^k) J_K(P'_{k+j})(q) \a_{k,\ell}(q)
\ee
and since $\gamma^j_\ell(v,v^k)$ is divisible by $(1-q^{k+1}) \dots (1-q^{k+j})$,
it follows that we can express $\gamma^j_{\ell,k}(q) \a_{k,\ell}(q)$ as a
combination of $\a_{k+j,\ell}(q)$. Doing so, and summing over $k$, and taking
care of the boundary terms, we obtain that $I_{(M,K_m)}(q) \in D_{2,K}$ for all
nonnegative integers $m$. As in the case of $b=1$, the 2-parameter family
of descendants~\eqref{IM2desc} is $q$-holonomic, and this implies that their
span $D_{2,K}$ has finite rank, concluding the proof of Theorem~\ref{thm.desc}
when $b=2$.

The general case of $b \geq 0$ follows using the formula for the WRT invariant
of $M=S^3_{K,-1/b}$ from Theorem 7 and the example in the end of Section 3
of~\cite{BL}:
\be
\label{IMb}
\begin{aligned}
I_M(q) &= \sum_{k \geq 0} J_K(P'_k) \a^{(b)}_k(q), \\
\a^{(b)}_{k}(q) &= q^{\frac{k(k+3)}{4}}
\sum_{k \geq \ell_1 \geq \ell_2 \geq \ell_{b-1} \geq 0}
q^{\sum_{j=1}^{b-1}\ell_j(\ell_j+1)}
\frac{(q;q)_k}{(q;q)_{\ell_1} (q;q)_{\ell_1-\ell_2} \dots
  (q;q)_{\ell_{b-2}-\ell_{b-1}} (q;q)_{\ell_b}}
\end{aligned}
\ee
and the descendants are the $b$-parameter family given by
\be
\label{IMbdesc}
I_M^{(m_0,m_2,\dots,m_{b-1})}(q) = \sum_{k \geq \ell \geq 0} J_K(P'_k)
\a^{(b)}_k(q) q^{k m_0 + \sum_{j=1}^{b-1} \ell_j m_j}, \qquad (m_j \in \BZ)
\ee
whose $\BZ[q^{\pm 1/4}]$-span together with $1$ is denoted by $D_{b,K}$.
The same arguments as before conclude the proof of Theorem~\ref{thm.desc}.
\qed

\subsection{An example}
\label{sub.example}

In this section we discuss a concrete example suggested in~\cite[Example.2.5]{Ga:CS}
and analyzed in detail by Wheeler in his thesis~\cite[Sec.6.9]{Wheeler:thesis}.
Consider the ZHS $M_0=S^3_{4_1,-1/2}$ obtained by $-1/2$ surgery on the $4_1$ knot.
This is a hyperbolic manifold whose trace field has degree $7$, and whose
fundamental group has $8$ conjugacy classes of representations into $\SL_2(\BC)$.

The 2-parameter family of descendants WRT invariants is given by 
\be
\label{IM0}
(1-q) I_{M_0}^{(m_0,m_1)}(q) = \sum_{k \geq \ell \geq 0} (-1)^k
q^{-\frac{1}{2}k(k+1)+\ell(\ell+1)} \frac{(q;q)_{2k+1}}{(q;q)_\ell (q;q)_{k-\ell}}
q^{k m_0 + \ell m_1}, \qquad (m_1,m_2 \in \BZ) 
\ee
with $(m_0,m_1)=(0,0)$ corresponding to the WRT invariant $I_{M_0}(q)$. These
descendants form a $q$-holonomic system of rank $8$, which implies that their
$\BZ[q^{\pm 1/4}]$-span ($D_{2,4_1}$, in the notation of Theorem~\ref{thm.desc})
has rank at most $8$. Using asymptotics and their arithmetic properties, Wheeler
proves in~\cite{Wheeler:41} $D_{2,4_1}$ has rank is exactly $8$. Moreover, an
explicit calculation implies that the module $V_{M_0,4_1}$ in
Theorem~\ref{thm.desc} has rank $8$. This and Equation~\eqref{V1} implies
that $V_{M_0}$ has rank at least $8$. On the other hand, the skein module $\calS(M_0)$
has rank $8$, which implies that $\vphi$ is (after tensoring with $\BQ(q^{1/4})$
an isomorphism. 

\subsection*{Acknowledgements} 

The authors  wish to thank  Adam Sikora, Peter Scholze, Campbell Wheeler
and Don Zagier for enlightening conversations. The work of T.~L. is
partially supported by NSF grants DMS-1811114 and DMS-2203255.


\bibliographystyle{plain}
\bibliography{biblio}
\end{document}